\documentclass[11pt]{amsart}
\usepackage{amsmath, amssymb, amsthm, mathrsfs, setspace}
\usepackage[dvips]{color} 
\usepackage[dvips]{graphicx}
\setstretch{1.1}
\theoremstyle{plain}
\newtheorem{theorem}{Theorem}[section]
\newtheorem{proposition}[theorem]{Proposition}

\newtheorem*{acknowledge}{Aknowledgments}
\theoremstyle{definition}
\newtheorem{definition}[theorem]{Definition}

\newtheorem{remark}[theorem]{Remark}
\newcommand{\del}{\partial}

\newcommand{\Z}{\mathbb{Z}}

\begin{document}

   \title[A note on Mazur type Stein fillings of planar contact manifolds]{ A note on Mazur type Stein fillings of \\ planar contact manifolds}
   \author[Takahiro Oba]{Takahiro Oba}
   \address{Department of Mathematics, Tokyo Institute of Technology, 2-12-1 Ookayama, Meguroku, Tokyo 152-8551, Japan}
   \email{oba.t.ac@m.titech.ac.jp}
   
   \keywords{Lefschetz fibrations, contact manifolds, mapping class groups}
   
   \begin{abstract}
	We construct a family of Stein fillable contact homology $3$-spheres $\{ (M_{n}, \xi_{n}) \}_{n\geq 1}$ 
   such that the contact structure $\xi_{n}$ is supported by an open book with planar page, and 
   a Stein filling of $(M_{n},\xi_{n})$ is of Mazur type for each $n$.
   \end{abstract}
   \date{\today}

   \maketitle
  
   \section{Introduction.}
	Let $M$ be a closed, oriented, connected $3$-manifold. 
	A $2$-plane field $\xi$ on $M$ is called a {\it{contact structure}} on $M$ 
	if it is represented 
	as $\xi = {\rm{ker}}\, \alpha$ for some $1$-form $\alpha$ on $M$ satisfying $\alpha \wedge d \alpha >0$. 
	An open book of $M$ is called a {\it{supporting open book}} for $\xi = {\rm{ker}}\, \alpha$ if $d\alpha$ is an area form of each page 
	and $\alpha >0$ on its binding. Giroux \cite{G} (cf. \cite{E}) showed that there is a one-to-one correspondence between 
	contact structures on $M$ up to isotopy and open books of $M$ up to an equivalence called \textit{positive stabilization}. 
	
	Using this correspondence, Etnyre and Ozbagci \cite{EO} introduced the following invariants 
	of contact structures by their supporting open books. 
	For a contact $3$-manifold $(M, \xi)$, the {\it{support genus}} ${\rm{sg}}(\xi)$ of $\xi$ is the minimal genus of a page of 
	a supporting open book for $\xi$, and the {\it{binding number}} ${\rm{bn}}(\xi)$ of $\xi$ is the minimal number of binding components 
	of a supporting open book for $\xi$ which has a page of genus ${\rm{sg}}(\xi)$. 
	They classified the contact structures $\xi$ on $M$ with ${\rm{sg}}(\xi) = 0$ and ${\rm{bn}}(\xi) \leq 2$. 
	Arikan \cite{A} also classified those with ${\rm{sg}}(\xi) = 0$ and ${\rm{bn}}(\xi)=3$. 
	The standard contact structure $\xi_{st}$ on $S^3$ 
	is the only Stein fillable contact structure on a homology $3$-sphere in their classification. 
	It is well-known after Eliashberg \cite{El} that $D^4$ is a unique Stein filling of $(S^3, \xi_{st})$.
	Therefore it is a natural question 
	whether there exists a homology $3$-sphere $M$ admitting a Stein fillable contact structure 
	$\xi$ with ${\rm{sg}}(\xi) = 0$ and ${\rm{bn}}(\xi)=4$ 
	and what is a Stein filling of $(M,\xi)$. 
	
	In this note, we give an answer to this question by 
	constructing a family of Stein fillable contact 3-manifolds with ${\rm{sg}}(\xi) = 0$ and ${\rm{bn}}(\xi)=4$ 
	whose Stein filling is not diffeomorphic to $D^4$. 
	
	Furthermore, these Stein fillings are of \textit{Mazur type}. 
	Mazur \cite{M} introduced a contractible $4$-manifold whose boundary is a homology $3$-sphere not diffeomorphic to $S^3$, 
	called the Mazur manifold. A Mazur type manifold was defined 
	as a generalization of the Mazur manifold 
	(for the precise definition, see Definition \ref{def: Mazur}). 
	
	Now we are ready to state the main theorem in this note.
	\begin{theorem}\label{main theorem}
	There exists a family of contact homology $3$-spheres $\{ (M_{n}, \xi_{n})\}_{n\geq 1}$ such that 
	\begin{enumerate}
	\item $M_{1}, M_{2}, \dots $ are mutually not diffeomorphic, 
	\item each contact structure $\xi_{n}$ is Stein fillable and supported by an open book with page a $4$-holed sphere, and 
	\item a Stein filling $X_{n}$ of $(M_{n},\xi_{n})$ is a Mazur type manifold. 
	\end{enumerate}
	\end{theorem}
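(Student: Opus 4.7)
The plan is to construct the family $(M_n, \xi_n)$ explicitly by prescribing positive Dehn twist factorizations on the $4$-holed sphere $\Sigma_{0,4}$ and then invoke the Loi--Piergallini theorem to realize their Stein fillings as planar Lefschetz fibrations. I would label the four boundary components of $\Sigma_{0,4}$, select three essential, non-boundary-parallel simple closed curves $c_1^{(n)}, c_2^{(n)}, c_3^{(n)}$ depending on a parameter $n \geq 1$ (a natural device is to conjugate a fixed initial triple by the $n$-th power of an auxiliary Dehn twist, producing distinct triples related by Hurwitz-type moves), and set $\phi_n = T_{c_1^{(n)}} T_{c_2^{(n)}} T_{c_3^{(n)}}$. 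By the Giroux correspondence this yields a contact $3$-manifold $(M_n, \xi_n)$ supported by an open book with page $\Sigma_{0,4}$, and by Loi--Piergallini the corresponding positive allowable Lefschetz fibration $X_n \to D^2$ is a Stein filling of $(M_n, \xi_n)$.

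With exactly three vanishing cycles the standard Lefschetz handle decomposition of $X_n$ has one $0$-handle, three $1$-handles (from a Morse decomposition of $\Sigma_{0,4}$), and three $2$-handles, so $\chi(X_n) = 1$, matching the Euler characteristic of a Mazur type manifold. I would then read off a presentation of $\pi_1(X_n)$ from this decomposition: three free generators from the $1$-handles and three relators from the attaching circles of the $2$-handles. Showing that the abelianization matrix has determinant $\pm 1$ gives $H_1(M_n) = 0$; if in addition the relators suffice to kill the free group $F_3$, then $X_n$ is simply connected, and together with $\chi(X_n) = 1$ this forces $X_n$ to be contractible.

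Next I would carry out an explicit Kirby-calculus reduction on the Lefschetz diagram: the goal is to identify two pairs of cancelling $1$- and $2$-handles, leaving a handle presentation of $X_n$ with one $0$-handle, one $1$-handle, and one $2$-handle. Reading off the attaching curve of the surviving $2$-handle, I would verify that it runs over the surviving $1$-handle algebraically once but geometrically more than once, matching the definition of Mazur type given in Definition~\ref{def: Mazur}. Finally, to separate the $M_n$ smoothly, I would compute a homology-sphere invariant such as the Casson invariant $\lambda(M_n)$ or a Heegaard Floer $d$-invariant from the induced surgery description, and verify that it varies strictly with $n$.

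The main obstacle is the joint design of the monodromy family $\phi_n$: the triple $(c_1^{(n)}, c_2^{(n)}, c_3^{(n)})$ must be tuned so that all three targets are met simultaneously, namely (i) the boundary $M_n$ is a homology sphere, (ii) the total space $X_n$ reduces to Mazur form under explicit Kirby moves, and (iii) the resulting $3$-manifolds are pairwise non-diffeomorphic. Carrying out the Kirby reduction uniformly in $n$, rather than for a single example, is the most delicate technical step; once a clean Mazur presentation of $X_n$ is in hand, distinguishing the boundaries via the chosen invariant should be routine.
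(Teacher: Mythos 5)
Your strategy coincides with the paper's: a three-curve positive factorization on the $4$-holed sphere with the $n$-dependence produced by twisting one curve by powers of an auxiliary mapping class, Loi--Piergallini/Akbulut--Ozbagci to get the Stein filling supported by the planar open book, Kirby calculus to reach a one $0$-handle, one $1$-handle, one $2$-handle picture, and a Casson-type invariant to distinguish the boundaries. However, as written the proposal has a genuine gap: the theorem is an existence statement, and you never exhibit a monodromy for which the program goes through --- you explicitly defer ``the joint design of the monodromy family'' as the main obstacle, and every verification (that $H_1(M_n)=0$, that two cancelling pairs exist, that the boundaries are distinct) is stated conditionally. The content of the proof is precisely this missing data. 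In the paper it is supplied by taking the curves $\alpha,\beta,\gamma$ of Figure \ref{fig: scc} and the monodromy $(t_\alpha, t_\beta, t_{\gamma_n})$ with $\gamma_n=(t_\gamma t_\beta)^n(\gamma)$; the Kirby reduction of Figure \ref{fig: Kirbydiagram} then gives contractibility and the Mazur handle decomposition, and the boundary is identified as $+1$-surgery on a ribbon knot $K_n$. The non-diffeomorphism step is also not ``routine'' in the generality you leave it: one needs $\Delta''_{K_n}(1)$ explicitly, which the paper obtains from the ribbon disk exterior via Fox--Milnor, $\pi_1(D^4-\nu(D_n))=\langle x,y \mid (xy)^n x (xy)^{-n} y^{-1}\rangle$, giving $\Delta_{K_n}(t)=f(t)f(t^{-1})$ with $f(t)=1-t+\cdots+t^{2n}$ and hence $\lambda(\del X_n)=n(n+1)$.

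Two smaller points. First, Definition \ref{def: Mazur} does not ask that the $2$-handle run over the $1$-handle algebraically once and geometrically more than once; it asks that $\del X_n$ not be diffeomorphic to $S^3$, so that condition must come from your invariant computation, not from the handle picture. Second, for that purpose it is not enough that the invariant ``varies strictly with $n$'': you need it to differ from its value on $S^3$ for every $n$ (in the paper, $\lambda(\del X_n)=n(n+1)\neq 0=\lambda(S^3)$), which simultaneously yields condition (1) and the last clause of Mazur type.
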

	
	We have one more motivation for our work. 
	Many examples of \textit{corks}, which are compact contractible Stein $4$-manifolds admitting a nice involution, 
	are known to be of Mazur type. 
	Some of them work to detect an exotic $4$-manifold pair. See \cite{AM} and \cite{AY}. 
	Thus the Stein fillings of the contact manifolds in the above theorem are candidates for corks, 
	and we may construct an exotic pair. 
	
	This note is constructed as follows. In Section $2$, we review some definitions and properties of positive Lefschetz fibrations and
	the Casson invariant. 
	In Section $3$, we prove Theorem \ref{main theorem}. 
	The proof of this theorem is based on works of 
	Loi and Piergallini \cite{LP} and Akbulut and Ozbagci \cite{AO}, who 
	proved that, for any {\it{positive allowable Lefschetz fibration}} ({\it{PALF}}\,) $f:X\rightarrow D^2$, 
	there exists a Stein fillable contact structure $\xi$ on $M=\del X$ such that 
	$\xi$ is supported by the open book obtained from $f$, and 
	$X$ is a Stein filling of $(M,\xi)$. 
	Thus first we give appropriate ordered collection of mapping classes as a monodromy of each PALF 
	belonging to a family of PALFs. 
	Drawing a Kirby diagram of the PALF and performing Kirby calculus, we examine it. 
	Finally we calculate the Casson invariant of its boundary and finish the proof of Theorem \ref{main theorem}. 
	
	\begin{acknowledge}
    \rm{The author would like to express his deep gratitude to his advisor, Hisaaki Endo, for his encouragement and many useful suggestions. 
    He would also like to thank the participants in the handle friendship seminar for helpful discussions on $4$- and $3$-dimensional topology 
	especially Tetsuya Abe for suggesting some references to the Alexander polynomial of a ribbon knot.} 
    \end{acknowledge}

   \section{Preliminaries.}
	We first review positive Lefschetz fibrations and the Casson invariant. 
	For more about contact topology we refer the reader to \cite{E}, \cite{Ge}, and \cite{OS}. 
	
	Throughout this note we will work in the smooth category and consider the homology groups with integer coefficients.
	\subsection{Positive Lefschetz Fibrations.}
   Let $X$ be a compact, oriented, smooth 4-manifold and $B$ a compact, oriented, smooth 2-manifold. 
   
   \begin{definition}
	A map $f:X\rightarrow B$ is called a \textit{positive Lefschetz fibration} if there exist points $b_{1}, b_{2}, \dots , b_{m}$ in 
	$\textrm{Int}(B)$ such that
	\begin{enumerate}
		\item $f|f^{-1}(B-\{b_{1}, b_{2}, \dots , b_{m}\}):f^{-1}(B-\{b_{1}, b_{2}, \dots , b_{m}\}) \rightarrow B-\{b_{1}, b_{2}, \dots ,\\ b_{m}\}$
				is a fiber bundle over $B-\{b_{1}, b_{2}, \dots , b_{m}\}$ with fiber diffeomorphic to an oriented surface $F$,
		\item $b_{1}, b_{2}, \dots , b_{m}$ are the critical values of $f$ with a unique critical point $p_{i} \in f^{-1}( b_{i})$ of $f$ for each $i$, 
		\item for each $b_{i}$ and $p_{i}$, there are local complex coordinate charts with respect to the given orientations of $X$ and $B$ such that locally $f$ can 
		be written as $f(z_{1},z_{2}) = z_{1}^2 + z_{2}^2$, and
		\item no fiber contains a $(-1)$-sphere, that is, an embedded sphere with self-intersection number $-1$.
	\end{enumerate}
   \end{definition}
   
	We call a fiber $f^{-1}(b)$ a \textit{singular fiber} if $b\in \{b_{1}, b_{2}, \dots , b_{m}\}$ or else a \textit{regular fiber}. 
	Also we call $X$ the \textit{total space} and $B$ the \textit{base space}.
	
	Here we will review roughly a handle decomposition of the total space of a given positive Lefschetz fibration over $D^2$. 
	For more details we refer the reader to \cite[$\S 8. 2$]{GS} and \cite[$\S 10. 1$]{OS}. 
	Suppose $f:X\rightarrow D^2$ is a positive Lefschetz fibration with fiber diffeomorphic to 
	a compact, oriented, connected genus $g$ surface $F$ with $r$ boundary components. 
	Then $X$ admits a handle decomposition 
	\begin{eqnarray}
		X & = &  (D^2 \times F) \cup (\bigcup_{i=1}^{m}h^{(2)}_{i}) \nonumber \\
		   & = &  (h^{(0)} \cup \bigcup_{j=1}^{2g+r-1} h^{(1)}_{j}) \cup (\bigcup_{i=1}^{m}h^{(2)}_{i}) \nonumber  
	\end{eqnarray}
	where each $h^{(k)}_{l}$ is a $k$-handle, and each $2$-handle $h_{i}^{(2)}$ corresponding to the critical point $p_{i}$ is 
	attached along a simple closed curve $\alpha_{i} \subset \{ pt. \} \times F \subset D^2 \times F$ with 
	framing $-1$ relative to the product framing on $\alpha_{i}$. 
	The attaching circle $\alpha_{i}$ of $h^{(2)}_{i}$ is called a \textit{vanishing cycle} for the singular fiber $f^{-1}(b_{i})$. 
	Then the ordered collection $( t_{\alpha_{1}}, \,  t_{\alpha_{2}} ,\cdots, t_{\alpha_{m-1}}, \, t_{\alpha_{m}})$ of positive Dehn twists 
	is called a \textit{monodromy} of $f$. 
	Conversely, note that a positive Lefschetz fibration over $D^2$ can be constructed from a given set of simple closed curves. 

	 We say that a positive Lefschetz fibration over $D^2$ is \textit{allowable} 
	if all of the vanishing cycles are homologically nontrivial in the fiber $F$. 
	 After this, a positive allowable Lefschetz fibration is abbreviated to a \textit{PALF}.
	 
	   \subsection{The Casson Invariant}
	   
	   Let $\mathcal{M}$ be the set of the homeomorphism classes of oriented homology $3$-spheres. We recall an invarinat of $\mathcal{M}$ 
	in this section.
	
	\begin{definition}
	A map $\lambda : \mathcal{M} \rightarrow \Z$ is called the {\textit{Casson invariant}} if 
		\begin{enumerate}
		\item $\lambda (S^3) = 0$ , 
		\item for any oriented homology $3$-sphere $M$, any knot $K$ in $M$, and any $m\in \Z$, 
				the difference
				\[
				\lambda (M + \frac{1}{m+1} \cdot K ) - \lambda (M + \frac{1}{m} \cdot K)
				\]
		is independent of $m$, where $M + (1/m) \cdot K$ is the manifold obtained from $M$ by $(1/m)$-surgery on $K$, and 
		\item for any oriented homology $3$-sphere $M$, any two components boundary link $K \cup K'$ in $M$, that is, 
		two knots $K$ and $K'$ which bound disjoint Seifert surfaces in $M$, 
				and any $m, n \in \Z$, 
				\begin{eqnarray}
				 && \lambda (M + \frac{1}{m+1} \cdot K +\frac{1}{n+1} \cdot K') 
						- \lambda (M + \frac{1}{m} \cdot K+\frac{1}{n+1} \cdot K') \nonumber \\
				&& -\lambda (M + \frac{1}{m+1} \cdot K +\frac{1}{n} \cdot K') + \lambda (M + \frac{1}{m} \cdot K+\frac{1}{n} \cdot K') \nonumber \\
				&& = 0 \, .\nonumber
				\end{eqnarray}
		\end{enumerate}
	\end{definition}
	
	\begin{remark}\label{rem}
		The existence and uniqueness up to sign of the Casson invariant were proven by Casson. 
		He also showed that this invariant $\lambda$ has some properties, in particular 
		$\lambda (M + ({1}/{(m+1)}) \cdot K ) - \lambda (M + ({1}/{m}) \cdot K) = ({1}/{2}) \Delta ''_{K\subset M}(1)$ 	
		for any oriented homology $3$-sphere $M$, any knot $K$ in $M$, and any $m\in \Z$, 
		where $\Delta _{K\subset M} (t)$ is the normalized Alexander polynomial of $K$ in $M$ 
		(see \cite{Sa1} and \cite[section 3.4]{Sa2} for other properties).
	\end{remark}
	   
	   In a special case, we can compute the Casson invariant of a given homology $3$-sphere by using the formula below. 
	
	\begin{proposition}[The surgery formula]\label{surgeryformula}
		Let $M$ be an oriented homology $3$-sphere and $K$ a knot in $M$. 
		Then, for any $m\in \Z$
		\[
		\lambda (M + \frac{1}{m} \cdot K) =  \lambda (M)+ \frac{m}{2} \Delta ''_{K\subset M}(1) \, .
		\]
	\end{proposition}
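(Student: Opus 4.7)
The plan is to derive the formula by a direct telescoping argument from the difference formula recorded in Remark~\ref{rem}, which asserts that
\[
\lambda(M + \tfrac{1}{m+1} \cdot K) - \lambda(M + \tfrac{1}{m} \cdot K) = \tfrac{1}{2}\Delta''_{K \subset M}(1)
\]
for every $m \in \Z$; crucially, the right-hand side is independent of $m$, so the problem reduces to summing a constant.

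First I would fix a base case by interpreting $(1/0)$-surgery along $K$ as trivial ($\infty$-)surgery, so that $M + (1/0)\cdot K = M$ and hence $\lambda(M + (1/0)\cdot K) = \lambda(M)$. For $m \geq 1$ I would then collapse the telescope
\[
\lambda(M + \tfrac{1}{m}\cdot K) - \lambda(M) = \sum_{k=0}^{m-1}\bigl[\lambda(M + \tfrac{1}{k+1}\cdot K) - \lambda(M + \tfrac{1}{k}\cdot K)\bigr] = \tfrac{m}{2}\Delta''_{K\subset M}(1),
\]
and for $m \leq -1$ run the analogous telescope from $k = m$ up to $k = -1$ to obtain
\[
\lambda(M) - \lambda(M + \tfrac{1}{m}\cdot K) = \sum_{k=m}^{-1}\tfrac{1}{2}\Delta''_{K\subset M}(1) = -\tfrac{m}{2}\Delta''_{K\subset M}(1),
\]
which rearranges to the stated closed form. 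The case $m=0$ is trivially the base case.

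The main conceptual obstacle is really already absorbed into Remark~\ref{rem}: axiom~(2) of the definition only says that the consecutive difference is independent of $m$, while it is Casson's identification of this common value with $\tfrac{1}{2}\Delta''_{K \subset M}(1)$ that makes the telescope collapse to a closed form linear in $m$. Granted this input together with the convention that $(1/0)$-surgery is trivial, the proposition is a one-line induction on $|m|$.
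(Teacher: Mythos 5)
Your proposal is correct and is essentially the paper's argument: the paper simply states that the proposition ``follows immediately from Remark~\ref{rem}'', and your telescoping sum (with the standard convention that $(1/0)$-surgery is trivial, so $M + \tfrac{1}{0}\cdot K = M$) is exactly the detail being left implicit there.
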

	
	\begin{proof}
	This proposition follows immediately from Remark \ref{rem}.
	\end{proof}
	
	 \section{Main theorem.}
	
	Let $S$ be a $4$-holed sphere and $\alpha$, $\beta$, and $\gamma$ simple closed curves in $S$ as in Figure \ref{fig: scc}. 
	We deal with a simple closed curve mapped by a diffeomorpsim of $S$ and it is complicated in general. 
	Thus we draw a simple closed curve in $S$ like as a graph for example in Figure \ref{fig: graph}. 
	This presentation can be justified by considering the original simple closed curve as one of the boundary components 
	of a tubular neighborhood of the graph. 
	
	 \begin{figure}[t]
			\begin{center}
				\includegraphics[width=150pt]{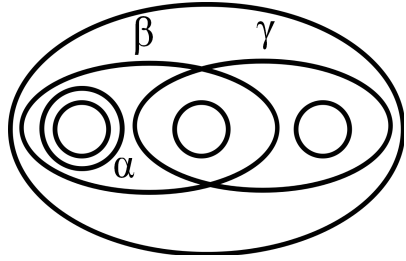}
				\caption{Simple closed curves $\alpha$, $\beta$, and $\gamma$ in $S$.}
				\label{fig: scc}
			\end{center}
	\end{figure}
	
    \begin{figure}[t]
			\begin{center}
				\includegraphics[width=300pt]{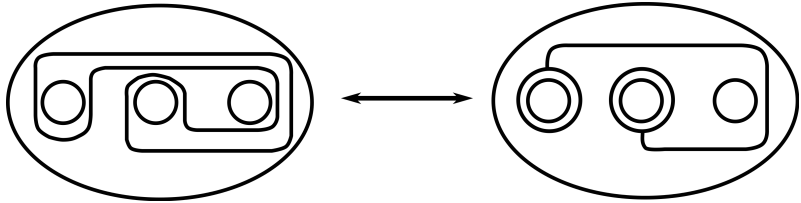}
				\caption{The graph on the right corresponds to the simple closed curve on the left.}
				\label{fig: graph}
			\end{center}
	\end{figure}
	
	Let $f_{n}: X_{n} \rightarrow D^2$ ($n\geq 1$) be a PALF with fiber $S$ whose 
	monodromy is 
	\[
	( t_{\alpha} \, , \, t_{\beta} \, , \, t_{ \gamma _{\, n}})
	\]
	where $\gamma _{n}$ is $(t_{\gamma} \, t_{\beta})^n (\gamma)$ (See Figure \ref{fig: X_{n}}).
	\begin{figure}[t]
		\begin{center}
			\includegraphics[width=300pt]{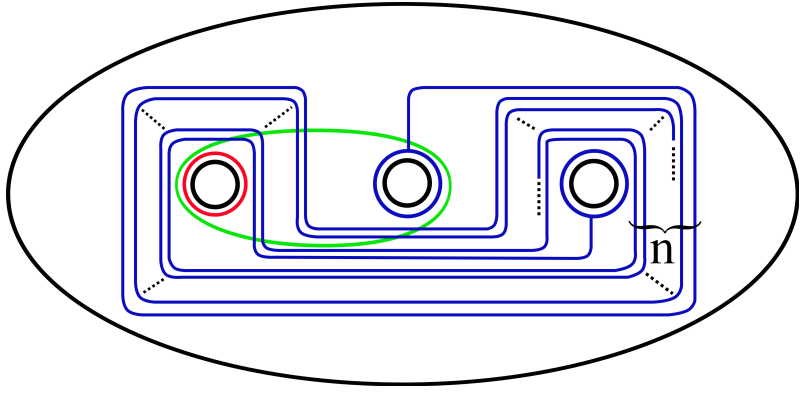}
			\caption{The vanishing cycles $\alpha$, $\beta$, and $\gamma_{n}$ of $f_{n}: X_{n} \rightarrow D^2$}
			\label{fig: X_{n}}
		\end{center}
	\end{figure}
	A Kirby diagram of $X_{n}$ can be drawn from the data of the vanishing cycles as the first diagram in Figure \ref{fig: Kirbydiagram}. 
	Performing Kirby calculus for this diagram as in Figure \ref{fig: Kirbydiagram}, 
	we obtain a simple diagram of $X_{n}$ as the last of Figure \ref{fig: Kirbydiagram}. 
	
    Before the proof of Theorem \ref{main theorem}, we define one more crucial notion. 
    
	\begin{definition}\label{def: Mazur}
	A $4$-manifold $X$ is called of \textit{Mazur type} if 
	it is contractible, the boundary $\del X$ is not diffeomorphic to $S^3$, and 
	it admits a handle decomposition consisting of one $0$-handle, one $1$-handle, and one $2$-handle.
	\end{definition}

	\begin{proof}[Proof of Thorem \ref{main theorem}]
		We first note that each $\del X_{n}$ admits the Stein fillable contact structure $\xi_{n}$ 
		supported by the open book obtained from $f_{n} : X_{n} \rightarrow D^2$ according to 
		Loi and Piergallini \cite{LP} and Akbulut and Ozbagci \cite{AO}. 
		We will show that $\{ (\del X_{n} , \xi_{n} )\}_{n\geq 1}$ is a family we want here.
		
		By the last diagram of $X_{n}$ as in Figure \ref{fig: Kirbydiagram}, 
		$H_{*}(X_{n}) = H_{*}(\{ pt. \})$ and $\pi_{1}(X_{n}) = 1$, 
		so each $X_{n}$ is contractible. $X_{n}$ has a handle decomposition satisfying the condition for a Mazur type manifold. 
		Thus once we show $\del X_{n}$ is not diffeomorphic to $S^3$, it follows that $X_{n}$ is of Mazur type. 
		Considering the first diagram of $X_{n}$ in Figure \ref{fig: surgerydiagram} as a surgery diagram of $\del X_{n}$ and 
		Performing some Kirby moves for the diagram of $\del X_{n}$ as in Figure \ref{fig: surgerydiagram}, we conclude that $\del X_{n}$ is 
		obtained from the Dehn surgery on the knot $K_{n}$ in Figure \ref{fig: surgerydiagram} with surgery coefficient $1$. 
			\begin{figure}[b]
		\begin{center}
				\includegraphics[width=400pt]{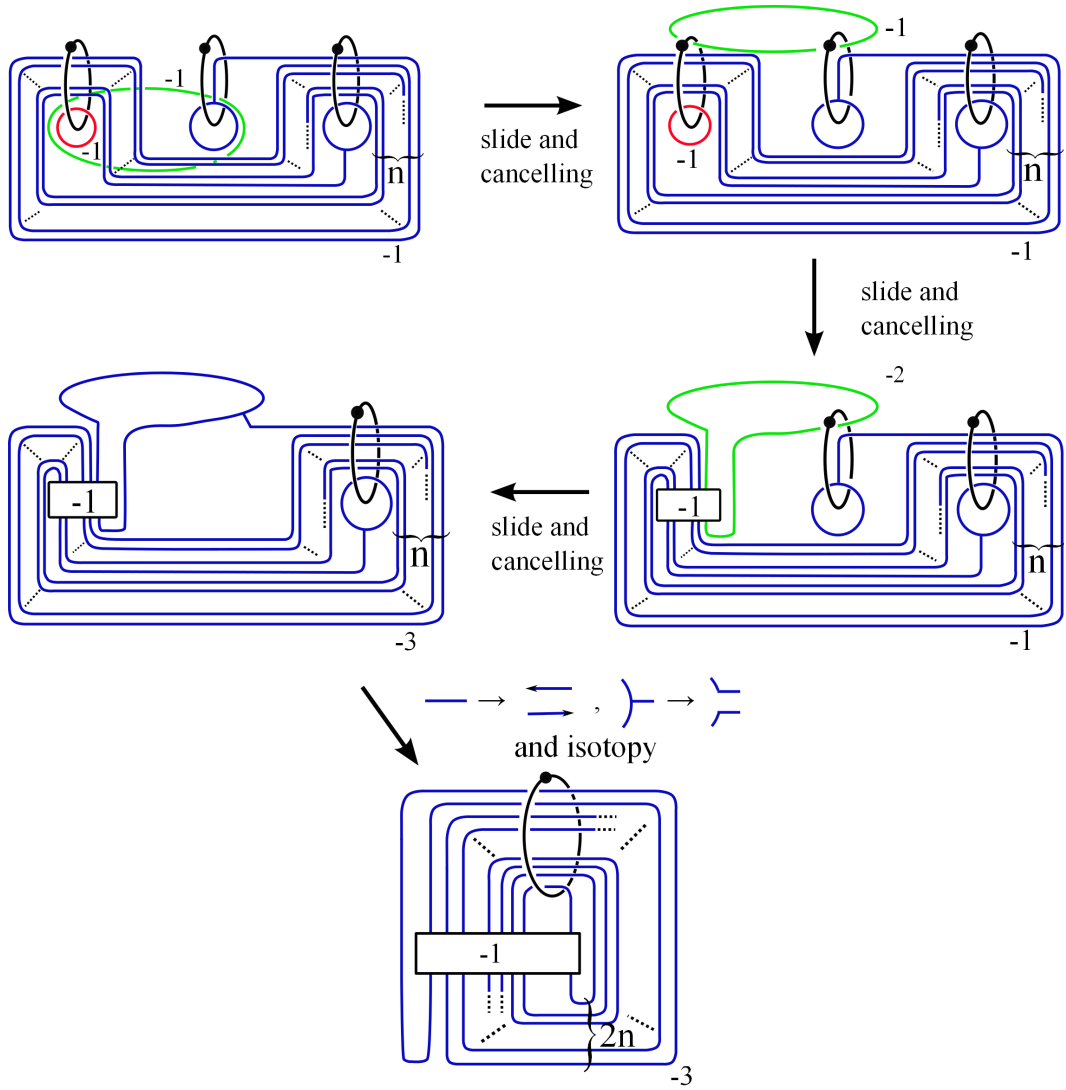}	
				\caption{Kirby diagrams of $X_{n}$ and Kirby calculus for the diagrams.}
				\label{fig: Kirbydiagram}
		\end{center}
		\end{figure}
		In order to determine the Casson invariant of $\del X_{n}$, we first compute the Alexander polynomial 
		$\Delta _{K_{n} \subset S^3}(t)$ i.e., $\Delta _{K_{n}}(t)$.
		Fox and Milnor \cite[Theorem $2$]{FM} showed that, if a knot $K$ is a slice knot, 
		there exists a polynomial $f(t)$ with integer coefficients such that $\Delta_{K}(t)=f(t) f(t^{-1})$.
		Furthermore it is well-known that, if $K$ is a ribbon knot, 
		the above polynomial $f(t)$ is equal to the Alexander polynomial $\Delta_{D}(t)$ of 
		a ribbon disk $D$ of $K$ up to units, and it is computed from 
		the fundamental group of the ribbon disk exterior 
		(cf. Terasaka \cite{Tera} computed $\Delta_{K}(t)$ of a ribbon knot $K$ from the Wirtinger presentation of $K$).
		Since each $K_{n}$ is a ribbon knot, considering the ribbon disk exterior $D^4 - \nu (D_{n})$ as in Figure \ref{fig: exterior}, 
		where $D_{n}$ is a ribbon disk of $K_n$ and $\nu(D_{n})$ is its tubular neighborhood in $D^4$, 
		we can determine the fundamental group of 
		$D^4 - \nu (D_{n})$ as follows ; 
		\[
		\pi_{1}( D^4 - \nu (D_{n})) = \langle x,y \, |\, (xy)^{n} x (xy)^{-n} y^{-1} \rangle . 
		\]
		Using this presentation of $\pi_{1}( D^4 - \nu (D_{n}))$, it follows from the above facts
		that 
		\[ 
			\Delta_{D_{n}}(t) = f(t) = 1- t + t^2 -t^3 + \cdots + t^{2n} ,
		\]
		and
		\begin{eqnarray}
			\Delta _{K_{n}} (t) & =  & f(t) f(t^{-1}) \nonumber \\
			& = & t^{2n} - 2 t^{2n-1} + 3 t^{2n-3} - \cdots - 2nt +(2n+1) \nonumber \\
			&   & -2nt^{-1} + (2n+1)t^{-2} \cdots - 2 t^{-(2n-1)} + t^{-2n}. \nonumber
		\end{eqnarray}
		 Thus 
		\[
		\Delta '' _{K_{n}} (1) = 2n(n+1) ,
		\]
		and by Proposition \ref{surgeryformula}
		\[
		\lambda (\del X_{n}) = \lambda (S^3 + {1} \cdot K_{n}) = \frac{1}{2}\times 2n(n+1)  = n (n+1) \neq 0. \
		\] 
		We conclude that $\del X_{1}, \del X_{2}, \dots$ are not diffeomorphic to $S^3$
		and mutually not diffeomorphic. Now we complete the proof of Theorem \ref{main theorem}.
		
\end{proof}

		\begin{figure}[h]	
					\includegraphics[width=380pt]{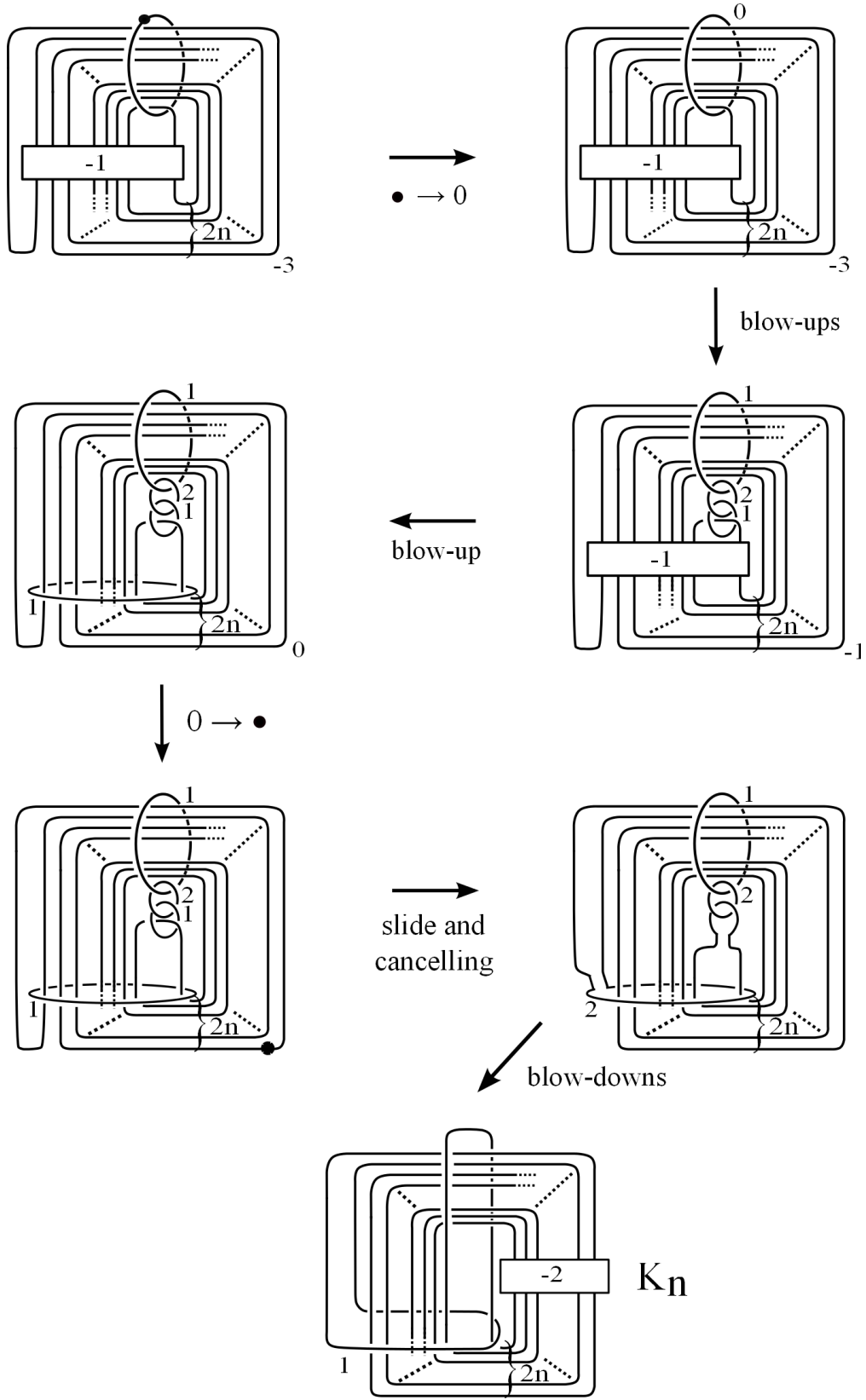}	
					\caption{Surgery diagrams of $\del X_{n}$ and deformation of the diagrams.}
					\label{fig: surgerydiagram}
		\end{figure} 
		\begin{figure}[h]
			\begin{center}
				\includegraphics[width=150pt]{}	
				\caption{The ribbon disk exterior for $K_{n}$.}
				\label{fig: exterior}
			\end{center}
		\end{figure}


\begin{thebibliography}{99}
\bibitem{A} M. Arikan, \textit{Planar contact structures with binding number three}, 
	Proceedings of G\"{o}kova Geometry-Topology Conference (2007), 90-124.

\bibitem{AM} S. Akbulut and R. Matveyev, \textit{A convex decomposition theorem for $4$-manifolds}, Int. Math. Res. Not. {\bf{1998}} (7) (1998), 371--381.

\bibitem{AO} S. Akbulut and B. Ozbagci, \textit{Lefschetz fibrations on compact Stein surfaces}, Geom. Topol. {\bf{5}} (2001), 939--945.

\bibitem{AY} S. Akbulut and K. Yasui, \textit{Corks, Plugs and exotic structures}, J. G\"{o}kova Geom. Topol. {\bf{2}} (2008), 40--82.

\bibitem{El} Y. Eliashberg, \textit{Filling by holomorphic discs and its applications}, Geometry of Low-dimensional Manifolds: {\bf{2}}, 
				Proc. Durham Symp. 1989, London Math. Soc. Lecture Notes \textbf{151}, Cambridge Univ. Press, 1990, 45--67.

\bibitem{E} J. Etnyre, \textit{Lectures on Open Book Decompositions and Contact}, Floer Homology, Gauge Theory, and Low Dimensional Topology, Clay Math. Proc. {\bf{5}}, Amer. Math. Soc., Providence, RI, 2006.

\bibitem{EO} J. Etnyre and B. Ozbagci, \textit{Invariants of contact structures from open books}, Trans. Amer. Math. Soc. {\bf{360}} (6) (2008), 3133--3151.

\bibitem{FM} R. Fox and J. Milnor, \textit{Singularities of $2$-spheres in $4$-space and cobordism of knots}, Osaka J. Math. {\bf{3}} (1966), 257--267.

\bibitem{Ge}H. Geiges, \textit{An introduction to contact topology}, Cambridge Stud. Adv. Math. {\bf{109}}, Cambridge Univ. Press, 2008.

\bibitem{G} E. Giroux, \textit{G\'eom\'etrie de contact: de la dimension trois vers les dimensions sup\'erieures}, Proceedings of the International Congress of Mathematicians, Vol. II (Beijing, 2002), 405--414.

\bibitem{GS} R. Gompf and A. Stipsicz, \textit{4-Manifolds and Kirby Calculus}, Grad. Stud. Math. {\bf{20}}, Amer. Math. Soc., Providence, RI, 1999. 

\bibitem{LP} A. Loi and R.Piergallini, \textit{Compact Stein surfaces with boundary as branched covers of $B^4$}, Invent. Math. {\bf{143}} (2) (2001), 325--348.

\bibitem{M}B. Mazur, \textit{A note on some contractible 4-manifolds}, Ann. of Math. (2) {\bf{73}} (1961), 221--228.

\bibitem{OS} B. Ozbagci and A. Stipsicz, \textit{Surgery on contact 3-manifolds and Stein surfaces}, Bolyai Soc. Math. 
Stud. {\bf{13}}, Springer-Verlag, 2004.

\bibitem{Sa1}N. Saveliev, \textit{Lectures on the topology of 3-manifolds: an introduction to the Casson invariant}, Walter de Gruyter \& Co., 
Berlin, 1999.

\bibitem{Sa2}N. Saveliev, \textit{Invariants of Homology 3-Spheres}, Encyclopaedia Math. Sci. \textbf{140}, Springer-Verlag, Berlin, 2002.

\bibitem{Tera}H. Terasaka, \textit{On null-equivalent knots}, Osaka Math. J. \textbf{11}(2) (1959), 95--113.


\end{thebibliography}
\end{document}